\newtheorem{thm}{Theorem}[section]
\newtheorem{prop}[thm]{Proposition}
\newtheorem{cor}[thm]{Corollary}
\theoremstyle{definition}
\newtheorem{definition}[thm]{Definition}
\newtheorem{example}[thm]{Example}
\theoremstyle{remark}
\newtheorem{remark}[thm]{Remark}
\numberwithin{equation}{section}
\newcommand{\WLY}{\mathrm{WLY}}
\newcommand{\CN}{\mathbb{C}^{n+1}}
\newcommand{\U}{\mathcal{U}}
\newcommand{\N}{\mathbb{N}}
\newcommand{\Z}{\mathbb{Z}}
\newcommand{\C}{\mathbb{C}}
\newcommand{\w}{\textbf{w}}
\newcommand{\fst}{\textbf{1}}
\title{The total Milnor number of the weighted-L\^{e}-Yomdin-at-infinity polynomial}
\author{Yongqiang Liu}
\address{Y. Liu: School of Mathematical Sciences, University of Science and Technology of China, No.96,  JinZhai Road Baohe District, Hefei, Anhui, 230026, P.R.China}
\email {liuyq@mail.ustc.edu.cn}
\date{\today}
\keywords{Complex polynomial function, the total Milnor number, monodromy at infinity, tame polynomial.}
\subjclass[2010]{14F45, 14J70, 32S20}
\begin{document}

\begin{abstract}    We give a concrete formula for the total Milnor number of the weighted-L\^{e}-Yomdin-at-infinity polynomial in most of the interesting cases. As an application, we give a description of the monodromy fibration at infinity for such kind of polynomial. 
 \end{abstract}

\maketitle

\section{Introduction}
Let $f\in \C[x_{1},\ldots,x_{n+1}]$ be a non-constant polynomial. Consider a system of positive weights $\textbf{w}=(w_{1},\ldots , w_{n+1}) \in \N^{n+1}$ with $\gcd(w_{1},\ldots , w_{n+1})=1$. Let us write $f=f_{N}+f_{N-1}+ \cdots + f_{0}$, where $f_{i}$ is the weighted degree $i$ part of $f$ and $f_{N}\neq 0$. It may happen that $f_{N-1}=0$, and so let $N-k$ be the greatest integer such that $f_{N-k}\neq 0$ with $0<k<N$. For $\w=\fst$, where $\fst=(1,1,\ldots,1)$, we say that the weights are usual.

We denote by $\partial f$ the gradient of $f$ \begin{center}
$\partial f: =(\partial f/\partial x_{1},\ldots,\partial f/\partial x_{n+1} )$.
\end{center} 

\begin{definition} Define the subset $S(f) $ in  $ \CN$ by $$S(f):= \lbrace x\in \CN \mid \partial f_{N}=0 , f_{N-k}=0\rbrace.$$ One says that $f$ is {\it weighted-L\^{e}-Yomdin-at-infinity} (or $\WLY_{\infty}$) for $(\w; N; k)$, if $S(f) \subset \lbrace 0\rbrace$, where $0$ is the origin in $\CN$.
\end{definition}

 Dimca showed (\cite{D3}, Lemma 9) that any $\WLY_{\infty}$ polynomial must be quasi-tame. If the weights are usual ($\w=\fst$), the total Milnor number of $f$ is computed by Artal-Bartolo, Luengo and Melle-Hern\'{a}ndez in \cite{ALM}, and the zeta function of monodromy operator at infinity for the generic fibre is computed by Gusein-Zade, Luengo and Melle-Hern\'{a}ndez in \cite{GLM}. If, in addition, $k = 1$, Garc\'{i}a L\'{o}pez and N\'{e}methi studied the monodromy  operator at infinity and the limit mixed Hodge structure of the generic fiber of $f$ by a series of papers: \cite{LN1, LN2, LN3}.

The aim of this paper is to give a concrete formula for the total Milnor number of $f$ with general weights in most of the interesting cases. This formula only involves the system of numbers $(\w;N;k)$ and some local invariants from the transversal singularities of $f_{N}$. As an application, we give a description of the monodromy fibration at infinity for the generic fibre of $f$. Moreover, we show that $f$ is indeed tame in most cases.

There exists a certain duality between the local setting of isolated hypersurface singularities and the global setting of weakly tame polynomials (cf. \cite{D5}). The corresponding {\it local weighted-L\^{e}-Yomdin singularities} is studied by Artal-Bartolo, Fern\'{a}ndez de Bobadilla, Luengo, and Melle-Hern\'{a}ndez  in \cite{AFLM}.  For the usual weights case $(\w=\fst)$, Luengo and Melle-Hern\'{a}ndez gave an explicit formula for the local Milnor number (see \cite[Theorem 2]{LM}).  This formula is very similar to the one for the global setting obtained in \cite[Theorem 2.1]{ALM}. For the general weighted case, it is interesting to compare  our formula (\ref{3.7}) provided later with the one for the local setting obtained in \cite[Theorem 3.2]{AFLM}.

In section 2, we introduce the tame and quasi-tame polynomials. In section 3, we use the second construction in \cite{DN} and Dimca's formula in \cite{D4} to give the formula for the total Milnor number of $f$. Section 4 is devoted to the applications.
 
 \textbf{Acknowledgments.} 
The author thanks Laurentiu Maxim and Alexandra Dimca for valuable comments during the preparation of this work.

\section{Tame and quasi-tame polynomial}
For any polynomial $f:\CN \to \C$, there exists a finite bifurcation set $ B_{f}\subset \C$ such that $f$ is a locally trivial fibration over $ \C\setminus {B_{f}} $. We set $F_{t}=f^{-1}(t)$. If $t\in{\C\setminus {B_{f}}} $, then $F=f^{-1}(t)$ is called the {\it generic fibre} of $f$. 

\begin{definition} \cite{B1,B2}   A polynomial $f:\CN \rightarrow \C$ is called tame, if there exists a constant real number $\varepsilon > 0$ and a compact set $K\subset \CN$ such that for any $x\in \CN \setminus K$, $\Vert \partial f \Vert > \varepsilon $.
\end{definition}
\begin{definition} \cite{N}   A polynomial $f:\CN \rightarrow \C$ is called quasi-tame, if there exists no sequence of points $ x^{j} \in \CN $ such that $\lim\limits_{j\rightarrow \infty}\Vert x^{j} \Vert = \infty$, $\lim\limits_{j\rightarrow \infty}\partial f = 0$ and the sequence $$f(x^{j})-\sum_{i=1}^{n+1}  x^{j}_{i}\dfrac{\partial f}{\partial x_{i}}(x^{j}) $$ has a finite limit for $j\to \infty$.
\end{definition}

\begin{remark} It is clear that any tame polynomial must be quasi-tame. For the example of quasi-tame polynomial, which is not tame, see\cite[Remark 7]{NZ} or Example \ref{ex4.3} provided later.
\end{remark}
 
The {\it total Milnor number} of $f$ is defined by the sum of the local Milnor numbers in all the singular points of $f$:  \begin{center}
$\mu(f)= \sum_{x\in \CN} \mu_{x}(f)$,
\end{center}  where $\mu_{x}(f)$ is the local Milnor number of $f$ at the point $x$. If $dim  \lbrace \partial f =0\rbrace \geq 1$, then $\mu (f)= \infty$ by convention.  N\'{e}methi showed that the quasi-tame polynomial exhibits the following good properties:
\begin{thm} \cite[Theorem 1.13]{N} If $f$ is  quasi-tame, then each fibre of $f$ has the homotopy type of a bouquet of $n$-spheres. In particular, for the generic fibre, we have $F\simeq \vee _{\mu(f)} S^{n}$, where $\mu(f)$ equals to the number of the spheres.
 \end{thm}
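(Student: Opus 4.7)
The plan is to combine a Morse-theoretic study of the fibres with the asymptotic control provided by the quasi-tame hypothesis. Fix $t\in\C$ and consider $F_t = f^{-1}(t)$. First I would reduce the problem to a large ball by constructing, for $R$ sufficiently large, a continuous vector field on $\CN \setminus B_R$ whose flow deformation retracts $F_t$ onto $F_t \cap \overline{B_R}$. The natural candidate is built out of $\overline{\partial f}$ together with the radial field, and the impossibility of producing such a field would be witnessed exactly by a forbidden sequence in the quasi-tame definition: the Euler-type expression $f - \sum_i x_i \partial f/\partial x_i$ is the ``certificate'' that allows the compact retraction to be set up.

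Next I would upgrade ``compactly retractable'' to ``bouquet of $n$-spheres.'' Each fibre is a complex hypersurface in $\CN$, so its smooth part is Stein, and Andreotti--Frankel Morse theory applied to $\Vert x \Vert^{2}$ gives a CW structure with real cells only in dimensions $\leq n$. A Lefschetz-type argument (for example Hamm--L\^{e} applied to a generic hyperplane section of a smooth projective compactification) shows that each fibre is $(n-1)$-connected. A simply connected CW complex with cells only in dimensions $0$ and $n$ is automatically a wedge of $n$-spheres, so $F_t \simeq \bigvee_{r_t} S^n$ for some integer $r_t \geq 0$.

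Finally, for the generic fibre $F$, the natural approach is to identify each $n$-cell with a vanishing cycle at an isolated critical point of $f$: by restricting $f$ to a large ball and tracking the change in topology as $t$ passes through the singular values in $B_f$, one sees that the generic fibre acquires exactly $\mu_x(f)$ new $n$-cells for every critical point $x$. Summing over all critical points yields $r = \mu(f)$. The main obstacle throughout is to ensure that no additional ``vanishing cycles at infinity'' are created: in the tame case this is automatic from $\Vert \partial f \Vert \geq \varepsilon$ outside a compact set, but under the weaker quasi-tame hypothesis one must exploit the unboundedness of $f - \sum_i x_i \partial_i f$ along every sequence with $\Vert x \Vert \to \infty$ and $\partial f \to 0$. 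This asymptotic step is the delicate heart of the argument, and is precisely where the shape of the quasi-tame hypothesis is forced.
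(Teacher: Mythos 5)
The paper offers no proof of this statement: it is quoted directly from N\'emethi \cite[Theorem 1.13]{N}, so there is no internal argument to compare yours with; your proposal has to stand on its own, and as written it does not.

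The concrete gap is your second step. You claim that $(n-1)$-connectivity of every fibre follows from a Lefschetz-type argument (Hamm--L\^e applied to a generic hyperplane section of a compactification), independently of the quasi-tame hypothesis. That is false: a smooth affine hypersurface of dimension $n$ in $\CN$ need not be $(n-1)$-connected. For instance $\lbrace xy=1\rbrace\subset\C^{3}$ (a fibre of $f=xy$, $n=2$) has $\pi_{1}\cong\Z$, and Broughton's example $f=x+x^{2}y$ in $\C^{2}$ has generic fibre $\C^{\ast}\simeq S^{1}$ while $\mu(f)=0$, so both the connectivity claim and the count $r=\mu(f)$ fail for general polynomials. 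The connectivity is therefore not "for free"; it is exactly the place where the asymptotic hypothesis must act, which is why this is a theorem of N\'emethi's Lefschetz theory for affine varieties rather than an application of generic hyperplane sections. Moreover, the two places where you do invoke quasi-tameness --- the deformation retraction of $F_{t}$ onto $F_{t}\cap\overline{B_{R}}$ via a vector field built from $\overline{\partial f}$, and the exclusion of vanishing cycles at infinity --- are only asserted, with the admission that this is "the delicate heart." Turning the condition on $f-\sum_{i}x_{i}\partial f/\partial x_{i}$ along sequences with $\Vert x\Vert\to\infty$, $\partial f\to 0$ into such a retraction requires a curve-selection-at-infinity/Malgrange-type estimate, and that is precisely the content of the cited theorem; deferring it leaves the proposal as an outline whose essential steps are either missing or wrongly attributed to general position arguments.
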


Dimca showed (\cite[Lemma 9]{D3}) that any $\WLY_{\infty}$ polynomial must be quasi-tame if the weights are usual ($\w=\fst$). Moreover, he and Paunescu mentioned that \cite[Lemma 9]{D3} can be easily extended to the general weighted case, see the last sentence in the second paragraph of \cite{DP} p. 1036.  Then, we have:
\begin{prop}
Any $\WLY_{\infty}$ polynomial must be quasi-tame.
\end{prop}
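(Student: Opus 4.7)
The plan is to argue by contradiction and reduce the failure of quasi-tameness to a violation of the $\WLY_{\infty}$ condition at a nonzero point. I would suppose there is a sequence $x^{j}\in\CN$ with $\Vert x^{j}\Vert\to\infty$, $\partial f(x^{j})\to 0$, and $g(x^{j}):=f(x^{j})-\sum_{i}x^{j}_{i}\,\partial f/\partial x_{i}(x^{j})$ converging to some $c\in\C$, and then apply the Curve Selection Lemma at infinity to the appropriate semi-algebraic set to replace the sequence by a real analytic curve $\gamma:(0,\varepsilon)\to\CN$ enjoying the same three asymptotic properties as $t\to 0$.

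Next I would rescale by the weighted $\C^{*}$-action attached to $\w$. Writing each coordinate of $\gamma$ as a Puiseux series $\gamma_{i}(t)=a_{i}t^{\alpha_{i}}+\cdots$ and setting $\alpha:=\max_{i}(-\alpha_{i}/w_{i})>0$ and $\lambda(t):=t^{-\alpha}$, the rescaled curve $y_{i}(t):=\lambda(t)^{-w_{i}}\gamma_{i}(t)$ remains bounded, and by the choice of $\alpha$ has a nonzero limit $y^{*}$. Using the weighted homogeneity of each $f_{m}$, one obtains
\[
\frac{\partial f}{\partial x_{i}}(\gamma(t))=\lambda^{N-w_{i}}\frac{\partial f_{N}}{\partial x_{i}}(y(t))+\lambda^{N-k-w_{i}}\frac{\partial f_{N-k}}{\partial x_{i}}(y(t))+\cdots,
\]
so that $\partial f(\gamma(t))\to 0$ together with $\lambda\to\infty$ forces $\partial f_{N}(y^{*})=0$. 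Since $y^{*}\ne 0$, the $\WLY_{\infty}$ hypothesis then yields $f_{N-k}(y^{*})\ne 0$.

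The last step is to contradict $g(\gamma(t))\to c$. Expanding
\[
g(\gamma(t))=\sum_{m}\lambda^{m}\bigl[f_{m}(y(t))-P_{m}(y(t))\bigr],\qquad P_{m}(y):=\sum_{i}y_{i}\frac{\partial f_{m}}{\partial x_{i}}(y),
\]
the $\lambda^{N}$ coefficient vanishes at $y^{*}$: the weighted Euler identity gives $Nf_{N}(y^{*})=\sum_{i}w_{i}y^{*}_{i}\partial f_{N}/\partial x_{i}(y^{*})=0$, so $f_{N}(y^{*})=0$, and $P_{N}(y^{*})=0$ follows directly from $\partial f_{N}(y^{*})=0$. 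For $g(\gamma(t))$ to stay bounded, the $\lambda^{N-k}$ coefficient must then vanish at $y^{*}$ too, yielding the identity $f_{N-k}(y^{*})=P_{N-k}(y^{*})$.

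The main obstacle is turning this last identity into an actual contradiction for general weights. When $\w=\fst$ the ordinary Euler relation gives $P_{N-k}(y^{*})=(N-k)f_{N-k}(y^{*})$, so $(1-(N-k))f_{N-k}(y^{*})=0$ immediately contradicts $f_{N-k}(y^{*})\ne 0$ except in the borderline case $N-k=1$, which is disposed of by going one order further in $\lambda$; this is the argument of \cite[Lemma 9]{D3}. For general $\w$, $P_{N-k}$ is no longer a scalar multiple of $f_{N-k}$, and the contradiction requires a finer Puiseux analysis that tracks how the correction $y(t)-y^{*}$ propagates from the $\lambda^{N}$ term down to the $\lambda^{N-k}$ order. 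This is precisely the adaptation alluded to in \cite{DP}, and it is where most of the technical effort will be concentrated.
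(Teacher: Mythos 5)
Your sketch stalls exactly where the content of this proposition lies, and it also contains one incorrect inference along the way. The inference ``the $\lambda^{N}$ coefficient vanishes at $y^{*}$, hence boundedness of $g(\gamma(t))$ forces the $\lambda^{N-k}$ coefficient to vanish at $y^{*}$'' is a non sequitur: the term $\lambda^{N}\bigl[f_{N}(y(t))-P_{N}(y(t))\bigr]$ is an $\infty\cdot 0$ expression, and knowing that the bracket vanishes at the limit point says nothing about whether this term is bounded, let alone negligible against the $\lambda^{N-k}$ term. To dispose of it you must control the order of vanishing of $f_{N}(y(t))-P_{N}(y(t))$ \emph{along the curve} (extracted from $\partial f(\gamma(t))\to 0$), or eliminate $f_{N}$ identically by working with the weighted combination $Nf-\sum_{i}w_{i}x_{i}\,\partial f/\partial x_{i}=\sum_{m}(N-m)f_{m}$ instead of evaluating coefficients at $y^{*}$. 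Without this, the identity $f_{N-k}(y^{*})=P_{N-k}(y^{*})$ is not established even as an intermediate step.

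More importantly, the step you defer (``a finer Puiseux analysis \dots where most of the technical effort will be concentrated'') is precisely the statement to be proved, since the usual-weights case is already \cite[Lemma 9]{D3}. A straightforward order count shows why the easy part of your argument cannot be pushed further as it stands: each $\gamma_{i}\,\partial f/\partial x_{i}(\gamma(t))$ has $t$-order strictly greater than $-\alpha w_{i}$, while $Nf(\gamma)-\sum_{i}w_{i}\gamma_{i}\,\partial f/\partial x_{i}(\gamma)=k\lambda^{N-k}f_{N-k}(y(t))+\cdots$ has order exactly $-(N-k)\alpha$ because $f_{N-k}(y^{*})\neq 0$; comparing these with the assumed boundedness of $g(\gamma(t))$ gives a contradiction only when $N-k\geq\max_{i}w_{i}$ (automatic for $\w=\fst$, which is why Dimca's argument closes). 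The remaining range is not a negligible borderline: the polynomial $h$ of Example \ref{ex4.3} is $\WLY_{\infty}$ with $N-k<\max_{i}w_{i}$ and is quasi-tame but not tame, so the case you leave open is exactly where the proposition has new content (compare the hypothesis $N-k\geq\max_i w_i$ in Corollary \ref{c4.2}). Note also that the paper itself gives no self-contained proof: it cites \cite[Lemma 9]{D3} together with the remark in \cite{DP} (p.~1036) that the lemma extends to general weights. So a direct proof along your lines would be a genuine addition, but as written your proposal establishes at best the usual-weights case and the range $N-k\geq\max_{i}w_{i}$, not the proposition.
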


\section{Formulas for the total Milnor number}
In this section, we give a concrete formula for $\mu(f)$ in most of the interesting cases.

\subsection{}
First, we show that the total Milnor number $\mu(f)$ can be expressed in terms of the Euler characteristic numbers of two affine Milnor fibers, defined by two weighted homogeneous polynomials both with at most 1-dimensional singularities.

Assume that $f$ is $\WLY_{\infty}$, then $S(f) =\lbrace x\in \CN \mid \partial f_{N}=0 , f_{N-k}=0\rbrace\subset \lbrace 0\rbrace$.   The Affine Dimension Theorem (\cite[Proposition 1.7.1]{H}) gives that $\dim \lbrace \partial f_{N}=0 \rbrace \leq 1$. So $f_{N}$ is a weighted homogeneous polynomial with at most $1$-dimensional singularities.

Without loss of generality, we assume $0\in \C\setminus B_{f}$ for the entire section. Otherwise, instead of studying $f$, we can study the polynomial $f-c$, where $c\in \C\setminus B_{f}$. Set $F_{0}=f^{-1}(0)$ and $\U=\CN\setminus F_{0}$. $f$ is $\WLY_{\infty}$, hence quasi-tame. $F_{0}$ is a generic fibre of $f$ by the assumption, then $F_{0}\simeq \vee_{\mu(f)}S^{n}$. Therefore, \begin{center}
$\mu(f)=(-1)^{n+1}\chi(\U)$. 
\end{center} 

The following construction can be found in \cite[Section 5.4]{DN} (named the second construction in loc.cit.). The cyclic cover of $\U$ with degree $N$ is defined by \begin{center}
$\U^{N}=\lbrace (u,x) \in \C^{\ast}\times  \CN   \mid f(x)-u^{N}=0 \rbrace$.
\end{center} 
Introduce a new variable $x_{0}$  of weight $1$ and let $\widetilde{f}(x_{0},x)$ be the  homogenization of $f$ with respect to the system of weights $(1,\w)$.  Consider the Milnor fibre $\widetilde{F}$ given by $\widetilde{f}=1$. Then we have an embedding $j: \U^{N} \rightarrow \widetilde{F}$ given by $j(u,x)=(u^{-1},u\cdot x)$, where $\cdot$ denotes the multiplication  associated to the system of weights $\w$, given by: \begin{equation} \label{3.0}
u \cdot x=(u^{-w_{1}}x_{1}, \ldots, u^{-w_{n+1}}x_{n+1}).
\end{equation}
 The complement $\widetilde{F}\setminus j(\U^{N})$ is characterized by $\lbrace x_{0}=0 \rbrace$, hence it can be indentified with the affine Milnor fibre $F_{N}: =\lbrace f_{N}=1\rbrace$ (considered in $\CN$), which is defined by the top weighted degree component $f_{N}$. Then, we have \begin{equation} \label{3.2}
\mu(f)=(-1)^{n+1}\chi(\U)=\dfrac{(-1)^{n+1}}{N}\chi(\U^{N})=\dfrac{(-1)^{n+1}}{N}(\chi(\widetilde{F})-\chi(F_{N})).
\end{equation}

\medskip

Next, we show that $\widetilde{f}^{-1}(0)$ has at most $1$-dimensional singularities.  
 Note that we also have a similar isomorphism  $i: \C^{\ast}\times F_{0} \rightarrow \widetilde{f}^{-1}(0)  \setminus  f_{N}^{-1}(0)$  given by $i(u,x)=(u^{-1},u\cdot x )$, where $f_{N}^{-1}(0)$ is characterized by $\lbrace x_{0}=0 \rbrace$. Recall the assumption that $F_{0}$ is a generic fibre of $f$ (hence smooth), then $\widetilde{f}^{-1}(0)  \setminus  f_{N}^{-1}(0)$ is also smooth.  This shows that the singularities of $\widetilde{f}^{-1}(0)$ is contained in the hyperplane $\lbrace x_{0}=0\rbrace$ in $\C^{n+2}$.  So we only need to compute the singularities of $\widetilde{f}^{-1}(0)$ in this hyperplane. Note that  $\widetilde{f}=f_{N}+x_{0}^{k}f_{N-k}+\cdots+x_{0}^{N}f_{0}$. Then  \begin{center}
 $\dfrac{\partial \widetilde{f}}{\partial x_{0}}\mid_{x_{0}=0}=\left\{ \begin{array}{ll}
f_{N-1}=0, & k=1, \\
0, & 1<k<N,\\
\end{array}\right.$
 \end{center}
 \begin{center}
 $\dfrac{\partial \widetilde{f}}{\partial x_{i}}\mid_{x_{0}=0}=\dfrac{\partial f_{N}}{\partial x_{i}}=0,$  for $1
 \leq i \leq n+1$.
 \end{center}
 So, if $k>1$, then  $\widetilde{f}^{-1}(0)_{sing}$ can be identified with $f_{N}^{-1}(0)_{sing}$; and, if $k=1$, the definition of the $\WLY_{\infty}$ polynomial gives that $\lbrace x\in \CN \mid\partial f_{N}=0, f_{N-1}=0 \rbrace \subset \lbrace 0 \rbrace$, then $\widetilde{f}$ has at most isolated singularity. Altogether, we indeed showed that $\widetilde{f}^{-1}(0)$ has at most $1$-dimensional singularities.

\subsection{}
If $f_{N}^{-1}(0)$ has only isolated singularities at the origin, then so does $\widetilde{f}^{-1}(0)$ (see the computations above). Using the formula of the Milnor number in \cite[Theorem 1]{MO}, we have that \begin{equation} \label{3.3}
\chi(F_{N})=1+(-1)^{n}\prod_{i=1}^{n+1} \dfrac{N-w_{i}}{w_{i}},
\end{equation}   \begin{equation} \label{3.4}
\chi(\widetilde{F})=1+(-1)^{n+1}(N-1)\prod_{i=1}^{n+1} \dfrac{N-w_{i}}{w_{i}}.
\end{equation}  Then (\ref{3.2}) gives that $\mu(f)=\prod_{i=1}^{n+1} \dfrac{N-w_{i}}{w_{i}}$ in this case.

Next, we assume that $f_{N}^{-1}(0)$ has 1-dimensional singularities, namely \begin{center}
$f_{N}^{-1}(0)_{sing}=\lbrace x\in \CN \mid \partial f_{N}(x)=0 \rbrace= \lbrace 0 \rbrace \cup \bigcup_{j=1}^{p} \C^{\ast}\cdot a^{j}$
\end{center}
for some points $a^{j}\in \CN$, one in each irreducible component of $f_{N}^{-1}(0)_{sing}$. Here $\cdot$ denotes the multiplication  associated to the system of weights $\w$ given in (\ref{3.0}). 
To simplify the notation, we analyze one of these points, denoted by $a$. Let $G$ be the {\it isotropy group} of the point $a$ with respect to the $\C^{\ast}$-action on $\CN$ associated to the system of weights $\w$. This group is  the multiplicative group of the $d$-roots of unity, where \begin{center}
$ d= \gcd\lbrace w_{i} ; a_{i} \neq 0 \rbrace$,
\end{center}  for $a=(a_{1},\ldots,a_{n+1})$. The assumption $\gcd(w_{1},\ldots , w_{n+1})=1$ at the very beginning of this paper assures that a generic point in $\CN$ has a trivial isotropy group.

Let $H$ be a small $G$ invariant hyperplane transversal to the orbit $\C^{\ast}\cdot a$ at the point $a$. For example, we may assume that $a_{n+1}\neq 0$ and then take $H: x_{n+1}=a_{n+1}$. We identify the germs $(\C^{n},0)$ with $(H,a)$ via the isomorphism given by $(y_{1},\ldots,y_{n})\mapsto (y_{1}+a_{1}, \ldots, y_{n}+a_{n}, a_{n+1})$. The isolated hypersurface singularity 
$(Y,0)=(H\cap f_{N}^{-1}(0),a)$ is called the {\it transversal singularity} of $f_{N}$ along the branch $\C^{\ast}\cdot a$. Denote by $g$ the local function germ  of $Y$ at $0$.   

Let $\gamma$ be the generator of the isotropy group $G$. For any point $y=(y_{1},\ldots,y_{n})\in H$, $\gamma \ast y =\lambda\cdot y= (\lambda^{-w_{1}}y_{1}, \ldots, \lambda^{-w_{n}}y_{n})$, where $\lambda=e^{2 \pi i /d}$. The associated action of $G$ on any polynomial $h\in \C[y_{1},\ldots,y_{n}]$ is given by  \begin{center}
$\gamma \ast h(y)= h(\lambda^{-1}\cdot y)=h(\lambda^{w_{1}}y_{1}, \ldots, \lambda^{w_{n}}y_{n}).$
\end{center} In general, $g$ is not $G$-invariant, but its zero set $Y=g^{-1}(0)$ is so.

Let $\C[[ y_{1}, \ldots, y_{n} ]]$ denote the formal power series ring. $g$ defines an isolated singularity, then let  $\mu_{0}(g)$ be the local Milnor number of $g$ at 0, which is the length of the Milnor algebra \begin{center}
$M(g)=\C[[ y_{1}, \ldots, y_{n} ]]/ (\partial g/\partial y_{1}, \ldots, \partial g/ \partial y_{n})$,
\end{center}
and, $\tau_{0}(g)$  be the local Tjurina number of $g$ at 0, which is the length of the Tjurina algebra \begin{center}
$T(g)=\C[[ y_{1}, \ldots, y_{n} ]]/ (g,\partial g/\partial y_{1}, \ldots, \partial g/ \partial y_{n})$,
\end{center}  respectively.
 Suppose that the system of monomials of $\lbrace e_{1}, \ldots, e_{\mu_{0}(g)} \rbrace$ is a basis for $M(g)$, then $\gamma$ defines a linear transformation $T$ on the vector space $M(g)$. Since $\gamma^{d}=1$, then $T$ is diagonalizable and the corresponding eigenvalues have order $d$. Denote by $M(g)^{l} $ the eigenspace of $T$ associated to the eigenvalue $e^{2 \pi i l/d }$.

 \medskip
For a weighted homogeneous polynomial with 1-dimensional singularities, Dimca gave (\cite{D4}) an explicit formula for the Euler characteristic number of its Milnor fibre (conjecturally in all, but surely in most of the interesting cases). This formula is composed by the system of weights, the weighted degree and some local invariants from the transversal singularities.

Consider the Poincar\'{e} series \begin{center}
$P(t)=\prod_{i=1}^{n+1} \dfrac{1-t^{N-w_{i}}}{1-t^{w_{i}}}=\sum_{s\geq 0} c_{s}(\w;N)t^{s}$
\end{center} associated to $(\w;N)$. Define the {\it virtual Euler characteristics of order $m$} of $F_{N}$ by the formulas:\begin{center}
$\chi_{m}(\w;N)=1+(-1)^{n} \sum_{s=1}^{mN-w} c_{s}(\w;N),$
\end{center} where $w=w_{1}+\cdots +w_{n+1}$.

\begin{thm} \label{t3.1} \cite[Proposition 3.19]{D4}  Assume that $f_{N}$ is a weighted homogeneous polynomial with 1-dimensional singularities. Let $g_{j}$ (resp. $G_{j}$) be the local function germ (resp. the isotropy group) associated to the transversal singularity orbits $\C^{\ast}\cdot a^{j}$.
\item[(1)] If there is a weighted homogeneous polynomial of type $(\w;N)$ having only isolated singularity at the origin, then \begin{equation} \label{3.5}
 \chi(F_{N})=1+(-1)^{n}\prod_{i=1}^{n+1} \dfrac{N-w_{i}}{w_{i}}+(-1)^{n+1}\sum_{j=1}^{p} \sum_{s=1}^{N} \dim M(g_{j})^{-N-w+s}.
\end{equation}
\item[(2)] Assume that any transversal singularity $g_{j}=0$ for $j$ satisfies that  $\mu_{0}(g_{j})-\tau_{0}(g_{j})\leq 1$, then \begin{equation} \label{3.6}
\chi(F_{N})=\chi_{m}(\w;N)+(-1)^{n+1} \sum_{j=1}^{p} \sum_{s=1}^{N} \dim M(g_{j}) ^{(m-1)N-w+s}
\end{equation}
 for all $m$ large enough. 
\end{thm}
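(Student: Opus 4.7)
The plan is to compute $\chi(F_N)$ by stratifying $f_N^{-1}(0)$ according to its singular structure and exploiting the weighted $\C^{\ast}$-action, which realizes $F_N$ as a cyclic cover of a weighted projective hypersurface. I would decompose $f_N^{-1}(0)$ into its smooth locus, the origin, and the punctured orbits $\C^{\ast}\cdot a^j$, and then use tubular neighbourhoods of each singular orbit to split $\chi(F_N)$ into a ``generic'' contribution (what one would get if $f_N$ had only an isolated singularity at the origin) plus local corrections accounting for the vanishing cycles concentrated along each $\C^{\ast}\cdot a^j$.

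For part (1), I would invoke a deformation argument. By hypothesis there exists a weighted homogeneous polynomial $h$ of type $(\w;N)$ with only an isolated singularity at $0$, whose Milnor fibre has $\chi(F_h)=1+(-1)^{n}\prod_{i}(N-w_i)/w_i$ by the Milnor--Orlik formula, already recorded in (\ref{3.3}). Interpolating inside the space of degree-$N$ weighted homogeneous polynomials from $h$ to $f_N$ and applying conservation of vanishing cycles, the cycles that migrate from the origin into each orbit $\C^{\ast}\cdot a^j$ assemble into the Milnor fibration of the transversal singularity $g_j$ spread along the orbit. Reading off the local contribution amounts to computing the trace of the $G_j$-action on the reduced cohomology of the transversal Milnor fibre; via the identification of this cohomology with $M(g_j)$ as a $G_j$-module, the eigenspaces $M(g_j)^{-N-w+s}$ for $s=1,\ldots,N$ are exactly those whose character matches the monodromy of $F_N$ around the orbit, with the shift $-N-w$ recording both the total weight $w=w_1+\cdots+w_{n+1}$ and the global degree $N$.

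For part (2), where no smooth model $h$ is assumed to exist, I would replace $\chi(F_h)$ by the truncated virtual Euler characteristic $\chi_m(\w;N)$ built from the Poincar\'{e} series $P(t)$. A generating-function argument expresses $\chi(F_N)$ as $\chi_m(\w;N)$ plus a correction, and the hypothesis $\mu_0(g_j)-\tau_0(g_j)\leq 1$ forces the equivariant Hilbert series of the Jacobian module of $g_j$ to stabilize in the expected range, so that once $m$ is large enough the correction is captured by the shifted eigenspaces $M(g_j)^{(m-1)N-w+s}$; for $m$ realizing the setting of part~(1) one recovers that case as a sanity check.

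The main obstacle is the character bookkeeping: one has to verify that the $\C^{\ast}$-monodromy on the transversal Milnor fibre corresponds to the $G_j$-action on $M(g_j)^{\bullet}$ in such a way that precisely the eigenspaces $M(g_j)^{-N-w+s}$ (respectively $M(g_j)^{(m-1)N-w+s}$) contribute. This depends on the delicate interplay between the weights $\w$, the order $d_j$ of the stabilizer, and the cyclic covering structure on $F_N$, and the sign combinatorics arising from the $G_j$-invariance (which eliminates many would-be contributions) is where the computation becomes genuinely subtle.
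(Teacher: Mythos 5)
This statement is not proved in the paper at all: it is quoted verbatim from Dimca \cite[Proposition 3.19]{D4}, so the benchmark is Dimca's own argument, which is algebraic rather than topological. There, $\chi(F_{N})$ is computed from the graded Koszul complex of the partial derivatives of $f_{N}$ (equivalently, a spectral sequence built from the graded Jacobian module), and the contribution of the $1$-dimensional singular locus enters through local cohomology supported on the orbits $\C^{\ast}\cdot a^{j}$, whose graded pieces are identified with the $G_{j}$-eigenspaces $M(g_{j})^{l}$; the degree shifts $-N-w+s$ and $(m-1)N-w+s$ fall out of that grading, the hypothesis in (1) guarantees the vanishing of the ``generic'' part of the Koszul cohomology in the relevant range, and the hypothesis $\mu_{0}(g_{j})-\tau_{0}(g_{j})\leq 1$ in (2) is exactly what forces the spectral sequence to stabilize for $m$ large. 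Your proposal takes an entirely different, deformation-theoretic route, which could in principle be made to work for part (1) but is not carried out here.

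As written, the proposal has genuine gaps. First, the ``conservation of vanishing cycles'' step is not justified: you interpolate from a polynomial $h$ with an isolated singularity to $f_{N}$, whose singular locus is $1$-dimensional, and in such a degeneration the Euler characteristic of the global Milnor fibre is not governed by a naive count of migrating cycles; one needs a precise specialization statement (or Dimca's algebraic machinery) to say what replaces the local Milnor numbers along the orbits, and that is exactly where the eigenspace condition comes from. Second, the decisive content of the formula --- that precisely the eigenspaces $M(g_{j})^{-N-w+s}$, $1\leq s\leq N$ (resp.\ $M(g_{j})^{(m-1)N-w+s}$), contribute --- is explicitly deferred as the ``main obstacle,'' so the statement being proved is assumed rather than derived. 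Third, in part (2) the hypothesis $\mu_{0}(g_{j})-\tau_{0}(g_{j})\leq 1$ is invoked only as a slogan (``forces the Hilbert series to stabilize'') with no argument; since this hypothesis is the delicate point of Dimca's proof (and its failure is what makes his general conjecture open), a proof must show concretely how it enters. In sum, the proposal is a reasonable heuristic outline but does not constitute a proof of either part.
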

\begin{remark} \label{r3.1} Note that if there is a weighted homogeneous polynomial of type $(\w;N)$ having only isolated singularity at the origin, then \begin{center}
 $\chi_{m}(\w;N)=1+(-1)^{n}\prod_{i=1}^{n+1} \dfrac{N-w_{i}}{w_{i}}$   for any $m\geq n+1$,
\end{center}  see \cite{D4}, p. 40. Then, on a formal level, the formula (\ref{3.5}) is a special case of the formula (\ref{3.6}), obtained by taking $m$ divisible by all $d_{j}=\vert G_{j}\vert$, $1\leq j \leq p$.  Dimca conjectured that the formula (\ref{3.6}) holds for any transversal singularities.  For more results in this direction, see \cite{D4}.
\end{remark}

\subsection{}

To get the formula of $\mu(f)$, we need to analyze the transversal singularities of $ \widetilde{f}$.

If $k=1$,  $\widetilde{f}^{-1}(0) $ has only isolated singularity at the origin, then $\chi(\widetilde{F})$ is computed in (\ref{3.4}).

If $k>1$, $\widetilde{f}^{-1}(0) $ has $1$-dimensional singularities, and let $\widetilde{g} $ denote the transversal singularities at the orbits $\C^{\ast} \cdot \widetilde{a}$. Here $\widetilde{a}=(0,a)$ and we use the same notation  $\cdot$ to denote the multiplication  associated to the system of weights $(1,\w)$.   The isotropy group at the point $\widetilde{a}$ is the same as the one at the point $a$. We still take the hyperplane $H: x_{n+1}=a_{n+1}$, but considered in $\C^{n+2}$.  Then we identify the germs $(\C^{n+1},0)$ with $(H,\widetilde{a})$ via the isomorphism given by $(y_{0},y_{1},\ldots,y_{n})\mapsto (y_{0},y_{1}+a_{1}, \ldots, y_{n}+a_{n}, a_{n+1})$. The corresponding isolated hypersurface singularity 
$(\widetilde{Y},0)=(H\cap \widetilde{f}^{-1}(0),\widetilde{a})$
is defined by the function germ $\widetilde{g}$.  

Note that $\widetilde{g}$ is analytically isomorphic to $g+x_{0}^{k}$ at the point $\widetilde{a}$. To be more precise, $\widetilde{f}=f_{N}+x_{0}^{k}f_{N-k}+\cdots+x_{0}^{N}f_{0}= f_{N}+x_{0}^{k}(q+f^{\prime})$, where $q=f_{N-k}(a)\neq 0$ is a constant number and $f^{\prime}(\widetilde{a})=0$. $f_{N-k}(a)\neq 0$ follows from the assumption that $f$ is $\WLY_{\infty}$. Then,  \begin{center}
 $\dfrac{\partial \widetilde{f}}{\partial x_{0}}=x_{0}^{k-1}(kq+kf^{\prime}+x_{0}\dfrac{\partial f^{\prime}}{\partial x_{0}});$
 \end{center}
 \begin{center}
 $\dfrac{\partial \widetilde{f}}{\partial x_{i}}=\dfrac{\partial f_{N}}{\partial x_{i}}+x_{0}^{k}\dfrac{\partial f^{\prime}}{\partial x_{i}},$  for $1
 \leq i \leq n$.
 \end{center}
Note that $kf^{\prime}+x_{0}\dfrac{\partial f^{\prime}}{\partial x_{0}}$ takes value 0 at the point $\widetilde{a}$. So $kq+kf^{\prime}+x_{0}\dfrac{\partial f^{\prime}}{\partial x_{0}}$ is a unit in the ring  $\C[[ x_{0},x_{1}-a_{1},\ldots,x_{n}-a_{n} ]]$. Then the ideal generated by  
$(\partial\widetilde{f}/ \partial x_{0}, \ldots, \partial\widetilde{f}/ \partial x_{n})$ is the same as the one generated by $(x_{0}^{k-1},\partial f_{N}/ \partial x_{1}, \ldots, \partial f_{N}/ \partial x_{n})$. Therefore, a basis of $M(g+x_{0}^{k})$ remains a basis for the Milnor algebra $M(\widetilde{g} )$. Suppose that the system of monomials of $\lbrace e_{1}, \ldots, e_{\mu_{0}(g)} \rbrace$ is a basis for $M(g)$, then $\lbrace  x_{0}^{t}e_{s} \rbrace_{1\leq s \leq \mu_{0}(g)}^{ 0\leq t \leq k-2 }  $ is a basis for $M(g+x_{0}^{k})$, hence so is it for $M(\widetilde{g} )$. Altogether, we have that \begin{center}
$M(\widetilde{g} )^{l}= \oplus_{0\leq t \leq k-2} M(g)^{l-t}$.
\end{center}

The following theorem follows at once from the formula (\ref{3.2}), Theorem \ref{t3.1} and the above analysis for the transversal singularities of $\widetilde{f}$.
\begin{thm}  \label{t3.3} Let $f$ be a $\WLY_{\infty}$ polynomial with respect to $(\w; N; k)$ in $\CN$. 
\item[(1)] Assume that there is a weighted homogeneous polynomial for $(\w;N)$ with only isolated singularity, then so is it for $((1,\w);N)$, hence
\begin{equation} \label{3.7}
\mu(f)=\prod _{i=1}^{n+1} \dfrac{N-w_{i}}{w_{i}}-\dfrac{1}{N}\sum_{j=1}^{p} \sum_{s=1}^{N} \sum_{t=0}^{k-1} \dim M(g_{j})^{-N-w+s-t}.
\end{equation}
\item[(2)] Assume that any transversal singularity $g_{j}=0$ for $j$ satisfies that  $\mu_{0}(g_{j})=\tau_{0}(g_{j})$, then so does the singularity $g_{j}+x_{0}^{k}=0$, hence
\begin{equation} \label{3.8}
\mu(f)= (-1)^{n+1}\dfrac{\chi_{m}((1,\w);N)-\chi_{m}(\w;N)}{N} -\dfrac{1}{N}\sum_{j=1}^{p} \sum_{s=1}^{N} \sum_{t=0}^{k-1} \dim M(g_{j})^{(m-1)N-w+s-t}
\end{equation} for all $m$ large enough.
\end{thm}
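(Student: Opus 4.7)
The plan is to combine formula (\ref{3.2}), which expresses $\mu(f)$ as a difference of Euler characteristics $\chi(\widetilde{F}) - \chi(F_N)$ up to the normalization $(-1)^{n+1}/N$, with the two formulas from Theorem \ref{t3.1}. The key observation is that the preceding analysis has already shown that both $f_N$ and $\widetilde{f}$ are weighted homogeneous with at most 1-dimensional singularities with respect to the systems $(\w; N)$ and $((1,\w); N)$ respectively, so Theorem \ref{t3.1} applies to both. Moreover, the discussion in Subsection 3.3 already identifies each transversal singularity $\widetilde{g}_j$ of $\widetilde{f}$ with $g_j + x_0^k$ and yields the eigenspace decomposition $M(\widetilde{g}_j)^l = \bigoplus_{t=0}^{k-2} M(g_j)^{l-t}$ under the (unchanged) isotropy group action, once one notes that the new coordinate $x_0$ carries weight $1$ and hence has eigenvalue $\lambda$ on polynomials.

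Before invoking Theorem \ref{t3.1}, I would verify that its hypotheses transfer from $(\w;N)$ to $((1,\w);N)$. For Part (1), if $h(x_1,\ldots,x_{n+1})$ is a weighted homogeneous polynomial of type $(\w;N)$ with only an isolated singularity at the origin, then $h + x_0^N$ is of type $((1,\w);N)$ and still has isolated singularity, since $\partial(h+x_0^N)/\partial x_0 = N x_0^{N-1}$ forces $x_0 = 0$ at any critical point, reducing to the given assumption. For Part (2), $\mu_0 = \tau_0$ is equivalent to quasi-homogeneity by K.~Saito's theorem, and if $g_j$ is quasi-homogeneous then so is $g_j + x_0^k$ in a common rational weight system, hence $\mu_0(\widetilde{g}_j) = \tau_0(\widetilde{g}_j)$ as well.

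The main computation is then purely formal. Substituting the outputs of Theorem \ref{t3.1} for $f_N$ and for $\widetilde{f}$ into (\ref{3.2}), the product terms $\prod_{i=1}^{n+1}(N-w_i)/w_i$ from $\chi(F_N)$ and $(N-1)\prod_{i=1}^{n+1}(N-w_i)/w_i$ from $\chi(\widetilde{F})$ combine with their signs $(-1)^n$ and $(-1)^{n+1}$ to give $N\prod_{i=1}^{n+1}(N-w_i)/w_i$, which becomes the leading term of (\ref{3.7}) after dividing by $N$. The transversal-singularity contributions split into two sums indexed by $1\le s\le N$: one over $\dim M(g_j)^{-N-w+s}$ (from $\chi(F_N)$) and one over $\dim M(\widetilde{g}_j)^{-N-\widetilde{w}+s}$ (from $\chi(\widetilde{F})$), with $\widetilde{w} = 1+w$. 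Expanding the second using the $M(\widetilde{g}_j)^l$ decomposition and reindexing $t \mapsto t+1$ shifts its range to $t = 1,\ldots,k-1$; the first sum then supplies precisely the missing $t = 0$ term, producing the full $\sum_{t=0}^{k-1}$ in (\ref{3.7}). Part (2) is identical with the virtual Euler characteristics $\chi_m(\w;N)$ and $\chi_m((1,\w);N)$ in place of the isolated-singularity expressions, yielding (\ref{3.8}).

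The main obstacle is ensuring that the shift from $w$ to $\widetilde{w}=1+w$ in the eigenspace index dovetails with the reindexing $t\mapsto t+1$ coming from the $M(\widetilde{g}_j)^l$ decomposition, so that the $\widetilde{F}$- and $F_N$-contributions merge cleanly into a single sum $\sum_{t=0}^{k-1}\dim M(g_j)^{-N-w+s-t}$; this is the one place where a sign or off-by-one error would break the formula. Everything else is signed bookkeeping, and the structure of the proof is the same for both parts.
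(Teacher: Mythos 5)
Your proposal is correct and follows exactly the paper's route: combining formula (\ref{3.2}) with Theorem \ref{t3.1} applied to both $F_N$ and $\widetilde{F}$, using the identification $\widetilde{g}_j \cong g_j + x_0^k$ and the decomposition $M(\widetilde{g}_j)^l = \oplus_{0\le t\le k-2} M(g_j)^{l-t}$ from the preceding subsection, and the reindexing $t\mapsto t+1$ together with the shift $w\mapsto w+1$ does merge the two contributions into the single sum $\sum_{t=0}^{k-1}$ as you describe. Your explicit verifications that the hypotheses transfer to $((1,\w);N)$ (via $h+x_0^N$, and via Saito's theorem for $g_j+x_0^k$) are correct details that the paper asserts without proof.
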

\begin{remark} If $\mu_{0}(g_{j})-\tau_{0}(g_{j})=1$, then $\mu_{0}(g_{j}+x_{0}^{k})-\tau_{0}(g_{j}+x_{0}^{k})=k$. So, if $k>1$, the assumption in Theorem \ref{t3.1} (2) is not satisfied and we can not use the formula (\ref{3.6}) for $\widetilde{F}$. But we can  still compute $\mu(f)$ under some good assumptions in this case. For instance, it is possible that the weighted homogeneous polynomial with isolated singularity exists for $((1,\w);N)$, even if it is not the case for $(\w;N)$, see Example \ref{ex3.2} provided later.  Then the total Milnor number of $f$ can be computed by using formulas (\ref{3.5}) and (\ref{3.6}) for $\widetilde{F}$ and $F_{N}$, respectively.
\end{remark}

\begin{remark}
 For the usual weights case ($\w=\fst$), $G_{j}$ is trivial ($d_{j}=1$) for all $j$. Then $\dim M(g_{j})^{l}=\mu_{0}(g_{j})$ for any $l$. Note that the homogeneous polynomial with only isolated singularity always exists. Then (\ref{3.7}) gives $\mu(f)= (N-1)^{n+1}-k \sum_{j=1}^{p} \mu_{0}(g_{j})$. So we recover the formula obtained by Artal-Bartolo, Luengo and Melle-Hern\'{a}ndez in \cite[Theorem 2.1]{ALM}.
\end{remark}
\begin{cor} \label{c3.5} Let $f$ be a $\WLY_{\infty}$ polynomial with respect to $(\w; N; k)$ in $\CN$. If either $k=1$, or $f$ is in one of the two cases considered in Theorem \ref{t3.3}, then $\mu(f)$ is determined by the top weighted degree component $f_{N}$ and the number $k$. It does not depend on of the choice of $f_{i}$, for $i\leq N-k$. 
\end{cor}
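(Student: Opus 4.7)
The plan is to prove the statement by direct inspection of the three explicit formulas already established above --- namely (\ref{3.4}) together with (\ref{3.2}) in the $k=1$ case, (\ref{3.7}) in the first case of Theorem \ref{t3.3}, and (\ref{3.8}) in the second case of Theorem \ref{t3.3}. In each situation one simply reads off that every quantity appearing on the right-hand side is built out of data attached either to $(\w;N;k)$ or to the top-degree component $f_{N}$ alone, with no dependence on any lower weighted-degree piece $f_{i}$ for $i\leq N-k$.

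First I would treat the case $k=1$. Here (\ref{3.2}) reduces $\mu(f)$ to $\frac{(-1)^{n+1}}{N}(\chi(\widetilde{F})-\chi(F_{N}))$. The affine Milnor fibre $F_{N}=\{f_{N}=1\}$ depends only on $f_{N}$, so $\chi(F_{N})$ is manifestly a function of $f_{N}$ alone. For $\widetilde{f}$, the computation in \S3.1 shows that when $k=1$ the hypersurface $\widetilde{f}^{-1}(0)$ has only an isolated singularity at the origin; since $\widetilde{f}$ is weighted homogeneous of type $((1,\w);N)$, the Milnor--Orlik formula (\ref{3.4}) makes $\chi(\widetilde{F})$ a function of $\w$ and $N$ only. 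Combining these yields the conclusion for $k=1$.

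Next I would turn to the two cases of Theorem \ref{t3.3}. In case (1), formula (\ref{3.7}) expresses $\mu(f)$ as $\prod_{i=1}^{n+1}(N-w_{i})/w_{i}$ minus $\frac{1}{N}\sum_{j,s,t}\dim M(g_{j})^{-N-w+s-t}$. The leading product depends only on $\w$ and $N$; the triple sum ranges over $1\leq s \leq N$ and $0\leq t \leq k-1$, hence its indexing depends only on $N$ and $k$; and each summand $\dim M(g_{j})^{\,\cdot}$ is an invariant of the transversal singularity $g_{j}$ of $f_{N}$, equipped with the isotropy $G_{j}$-action, which in turn is determined by $f_{N}$ alone. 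The same inspection applies to (\ref{3.8}): the virtual Euler characteristics $\chi_{m}((1,\w);N)$ and $\chi_{m}(\w;N)$ are functions of $\w$ and $N$, and the dimensions $\dim M(g_{j})^{(m-1)N-w+s-t}$ again depend only on $f_{N}$.

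Since none of the quantities entering any of the three formulas involves $f_{N-k},f_{N-k-1},\ldots,f_{0}$, the stated independence follows. I do not anticipate a substantive obstacle; the argument is a bookkeeping verification that no lower-degree component has leaked into the right-hand sides. The only point that deserves a moment of care is the claim that the transversal-singularity data of $\widetilde{f}$ feeding (\ref{3.2}) can be re-expressed purely in terms of $f_{N}$, but this is precisely what \S3.3 has already done via the analytic isomorphism $\widetilde{g}\cong g+x_{0}^{k}$ and the resulting identification $M(\widetilde{g})^{l}=\bigoplus_{t=0}^{k-2}M(g)^{l-t}$; once this reduction is invoked, the corollary is immediate.
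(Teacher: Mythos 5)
Your proposal is correct and follows essentially the same route as the paper: the $k=1$ case is read off from (\ref{3.2}) together with (\ref{3.4}), and the remaining cases follow by observing that the formulas (\ref{3.7}) and (\ref{3.8}) of Theorem \ref{t3.3} involve only $(\w;N;k)$ and the transversal singularity data of $f_{N}$. No gaps; the extra remark about $M(\widetilde{g})^{l}=\bigoplus_{t=0}^{k-2}M(g)^{l-t}$ is already built into Theorem \ref{t3.3} and need not be re-invoked.
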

\begin{proof}
For $k=1$, $\chi(\widetilde{F})$ is computed in (\ref{3.4}), involving only the system of numbers $(\w;N)$. It is clear that $\chi(F_{N})$ is determined by $f_{N}$. Then the claim follows from the formula (\ref{3.2}).

 Since $f_{N}$ contains all the information about the transversal singularities $g_{j}$ for all $j$, then the claim for the later case follows at once from Theorem \ref{t3.3}.
\end{proof}
\begin{remark} If Dimca's conjecture mentioned in Remark \ref{r3.1} holds for any transversal singularities, then  the formula (\ref{3.8}) and Corollary \ref{c3.5} also hold for any $\WLY_{\infty}$ polynomial.
\end{remark}

\begin{example} Assume that $f=x_{1}^{7}x_{2}+x_{3}^{3}+x_{2}$ in $\C^{3}$.  $f$ is $\WLY_{\infty}$ with respect to $\w=(2,1,5); N=15; k=14$.  Note that 
$f_{N}^{-1}(0)_{sing}:  x_{1}= x_{3}=0$.  So we can choose $a=(0,1,0)$ and the corresponding isotropy group $G$ is trivial $(d=1)$. The local function germ is $g=y_{1}^{7}+y_{3}^{3}$, and $\mu_{0}(g)=12$.  Note that there exists a weighted homogeneous polynomial of type $(\w; N)$ having only isolated singularity (for instance, $x_{1}^{7}x_{2}+x_{2}^{15}+x_{3}^{3}$).
So the formula (\ref{3.7}) gives  $\mu(f)=14$.

We can use another system of weights to check our results. $f$ is also $\WLY_{\infty}$ with respect to $\w^{\prime}=(1,2,3); N^{\prime}=9; k^{\prime}=7$.   $f_{N^{\prime}}^{-1}(0)_{sing}:  x_{1}= x_{3}=0$, $a=(0,1,0)$ and the local function germ is still $g=y_{1}^{7}+y_{3}^{3}$. But the corresponding isotropy  group $G$  becomes to $\Z/2\Z$, and $\dim M(g)^{l}=6$ for any $l$.  There also exists a weighted homogeneous polynomial of type $(\w^{\prime}; N^{\prime})$ having only isolated singularity (for instance, $x_{1}^{9}+x_{1}x_{2}^{4}+x_{3}^{3}$),   then the formula (\ref{3.7})  gives $\mu(f)=14$ again.
\end{example}

\begin{example} \label{ex3.2} Assume that $f$ is $\WLY_{\infty}$ in $\C^{4}$ with respect to $\w=(1,24,33,58); N=265$ and one fixed number $k<265$.  This homogeneity $(\w;N)$ is well-known, since the associated Poincare series is a polynomial in spite of the fact that the associated weighted homogeneous polynomial with isolated singularity do not exist, see \cite{AGV}, p. 201. 
Take the highest weighted degree form $f_{N}=x_{1}^{265}+x_{1}x_{2}^{11}+x_{1}x_{3}^{8}+x_{3}x_{4}^{4}$.  Note that $f_{N}^{-1}(0)_{sing}: x_{1}=x_{4}= x_{2}^{11}+x_{3}^{8}=0$. So we can choose $a=(0,-1,1,0)$, and the corresponding isotropy group is $G=\Z/3\Z$.  Dimca showed that the isotropy group acts on $M(g)$ such that $\dim M(g)^{l}=1$ for any $l$, and $\chi(F_{N})= -66250$, see \cite[Example 3.20]{D4}.  

For the homogeneity $(1,\w)=(1,1,24,33,58)$ and $N=265$, there exists an associated weighted homogeneous polynomial with isolated singularity (for instance, $2x_{0}^{265}+x_{0}x_{2}^{11}+x_{1}^{265}+x_{1}x_{2}^{11}+x_{1}x_{3}^{8}+x_{3}x_{4}^{4}$). Then the formula (\ref{3.5}) gives that  \begin{center}
$\chi(\widetilde{F})=1+264\times 264 \times \dfrac{265-24}{24}\times \dfrac{265-33}{33}\times \dfrac{265-58}{58}-265 (k-1)=17560490-265 k$.
\end{center}
 Using the formula (\ref{3.2}), we have $\mu(f)= 66516 -k > 66251$, where $0< k < 265$.
\end{example}

\section{Applications}

\subsection{Monodromy fibration at infinity} Let $B_{f}$ be the bifurcation set of $f$, then $f$ is a locally trivial fibration over $ \C\setminus {B_{f}} $. Fix $r$ such that $r> \max \lbrace \vert b\vert : b\in B_{f}  \rbrace$. The restriction \begin{center}
$ f: f^{-1}(S^{1}_{r})\to S^{1}_{r}$
\end{center} is called {\it the monodromy fibration at infinity} of $f$. The geometric monodromy associated with the path $s\rightarrow r e^{2\pi i s}, s\in [0,1]$ is a diffeomorphism of the generic fibre $F$ onto itself, which induces an isomorphism: \begin{center}
$M_{\infty}:\widetilde{ H}_{\ast}(F,\Z)\rightarrow \widetilde{H}_{\ast}(F,\Z)$,
\end{center}
This isomorphism is called the {\it  monodromy operator at infinity} of $f$. 

In this subsection, we show that the monodromy fibration at infinity of the $\WLY_{\infty}$ polynomial $f$ is only determined by $f_{N}$ and the number $k$ in most of the interesting cases.

 Recall the the following result obtained by H\`{a} Huy Vui and Zaharia:
\begin{thm} \label{t4.1} \cite{HZ} Let $f_{t}(x)$ be a family of polynomials, whose coefficients are smooth complex valued functions of $t\in [0,1]$. Suppose that for each $t\in [0,1]$, the polynomial $f_{t}(x)$ is M-tame and, moreover, the total Milnor number $\mu(f_{t})$ is independent of $t\in [0,1]$. Then the monodromy fibrations at infinity of the polynomials $f_{0}$ and $f_{1}$ are of the same fiber homotopy. If further $n\neq 2$, these fibrations are even differentiably isomorphic.
\end{thm}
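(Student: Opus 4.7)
The plan is to combine uniform M-tameness across the family with constancy of the total Milnor number to promote a topological/smooth triviality of the monodromy fibrations over the parameter interval $[0,1]$. First I would establish a uniform version of M-tameness: by compactness of $[0,1]$ and the smooth dependence of the coefficients of $f_t$, together with each $f_t$ being M-tame, one can choose a single radius $R$ and a single radius $r$ (with $r$ larger than $\max\{|b| : b\in B_{f_t},\, t\in[0,1]\}$, which is controlled because $M$-tameness forbids critical values escaping to infinity) such that for every $t\in[0,1]$, every $|x|\ge R$ with $|f_t(x)|\le r$, the sphere $S^{2n+1}_{|x|}$ is transverse to $f_t^{-1}(f_t(x))$. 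This realizes the monodromy fibration at infinity as the restriction $f_t\colon f_t^{-1}(S^1_r)\cap \overline{B}_R\to S^1_r$ up to diffeomorphism, uniformly in $t$.

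Next I would study the total space
\[
\mathcal{X}=\{(x,s,t) : f_t(x)=s,\ |s|=r,\ |x|\le R,\ t\in[0,1]\}
\]
as a smooth family over $S^1_r\times[0,1]$. Away from the critical locus it is a submersion, and the boundary conditions $|s|=r$ and $|x|=R$ are transverse thanks to uniform M-tameness. The critical contribution along the family is controlled by the assumption that $\mu(f_t)$ is constant: by the Lê--Ramanujam principle applied locally to each critical point of $f_t$ (varying smoothly in $t$, with possible collisions), constancy of the sum of local Milnor numbers prevents vanishing cycles from being created or destroyed, in particular none can escape to infinity. Combining Ehresmann's fibration theorem on the smooth part with a tubular-neighborhood trivialization around the critical values gives a continuous family of fiber homotopy equivalences between the fibrations at infinity of $f_0$ and $f_t$, proving the first statement.

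For the diffeomorphism statement when $n\neq 2$, I would upgrade the fiber homotopy equivalence via the parametrized $h$-cobordism / Lê--Ramanujam theorem: the generic fiber $F$ is a bouquet of $n$-spheres, simply connected when $n\ge 2$, and the trace of the family over $[0,1]$ produces an $h$-cobordism between the two fibrations of dimension $2n+1$. For $n\ge 3$ this $h$-cobordism is trivial by Smale's theorem, and for $n=1$ a direct surface-theoretic argument handles the low-dimensional case; only $n=2$ is excluded because the 4-dimensional $h$-cobordism theorem is not available smoothly.

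The main obstacle is Step~2: ensuring that the family $\mathcal{X}$ really is a locally trivial fibration over $[0,1]$ even across parameter values where critical points of $f_t$ collide or split. This is exactly where constant $\mu(f_t)$ is indispensable, because without it one loses the Lê--Ramanujam input that forbids topological jumps. The role of M-tameness, by contrast, is the purely ``geometry at infinity'' input: it guarantees that the fibration at infinity is well-defined and stable under the deformation, so that the issue reduces to the compact picture on $\overline{B}_R$ where classical deformation theory of isolated singularities applies.
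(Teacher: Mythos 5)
This theorem is not proved in the paper at all: it is quoted from H\`a Huy Vui and Zaharia \cite{HZ}, so there is no internal argument to compare with, and your sketch has to be judged against the proof in that reference.

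Your sketch has a genuine gap at its central step. You claim that compactness of $[0,1]$ together with M-tameness of each $f_{t}$ yields a single pair of radii $(R,r)$ realizing the monodromy fibration at infinity simultaneously for all $t$, and that the resulting family $\mathcal{X}\to S^{1}_{r}\times[0,1]$ is then trivialized by Ehresmann's theorem. If that step were correct, it would give a \emph{diffeomorphism} of the two fibrations for every $n$, with no need to exclude $n=2$; the very form of the statement tells you the argument cannot go through this way. The failure is that M-tameness provides radii depending on $t$ with no semicontinuity in $t$: points of the Milnor set (tangencies of the fibres with large spheres) can escape to infinity along a sequence $t_{j}\to t_{0}$ even though every $f_{t_{j}}$ and $f_{t_{0}}$ is M-tame, so uniform transversality on a fixed sphere $\lbrace \vert x\vert =R\rbrace$ is exactly what you cannot assume. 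You also misattribute the roles of the hypotheses: boundedness of the critical points and critical values across the family comes from $\mu$-constancy (if critical points escaped to infinity, semicontinuity of the local Milnor numbers would force $\mu(f_{t_{0}})$ to drop), not from M-tameness, and your appeal to a ``tubular-neighborhood trivialization around the critical values'' is vacuous, since the circle $S^{1}_{r}$ lies outside the disc containing all critical values and the total space of the fibration contains no critical points. The actual proof in \cite{HZ} instead compares the fibrations for nearby parameters by inclusions of one Milnor tube into a larger one: because both generic fibres are bouquets of $n$-spheres with the same number $\mu$ of spheres, such an inclusion is a homotopy equivalence, Dold's criterion upgrades it to a fibre homotopy equivalence (this is the first conclusion, valid for all $n$), and only an h-cobordism argument in the style of L\^e--Ramanujam yields the differentiable statement, which is precisely where the restriction $n\neq 2$ enters. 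Your outline gestures at these ingredients but places them in the wrong part of the argument, and the uniform-Ehresmann shortcut it relies on is the step that fails.
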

\begin{remark}
M-tame polynomial is introduced by N\'{e}methi and Zaharia in \cite{NZ}. In particular, they showed that any quasi-tame polynomial must be M-tame in loc.cit..
\end{remark}

\begin{thm} \label{2.5} Fix a positive system of numbers $(\w;N;k)$. Let $f=f_{N}+f_{N-k}+ \cdots + f_{0}$ and $h=h_{N}+h_{N-k}+ \cdots + h_{0}$ be two $\WLY_{\infty}$ polynomials both with respect to $(\w;N;k)$, and $f_{N}=h_{N}$. If either $k=1$ or $f_{N}=h_{N}$ is in one of the two cases considered in Theorem \ref{t3.3}, then the monodromy fibrations at infinity of $f$ and $h$ are homemorphic equivalent. If further $n\neq 2$, these fibrations are even differentiably isomorphic.
\end{thm}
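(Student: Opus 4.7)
The plan is to apply the H\`{a} Huy Vui--Zaharia theorem (Theorem~\ref{t4.1}): it suffices to exhibit a smooth one-parameter family joining $f$ and $h$ whose members are all $\WLY_\infty$ for $(\w;N;k)$, and hence M-tame, and whose total Milnor numbers coincide.

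Consider the affine family $\phi_s = (1-s)f + s h$ for $s \in \C$. Since $f_N = h_N$, the top weighted-degree part of $\phi_s$ is always $f_N$, while its weighted-degree-$(N-k)$ part equals $P_s := (1-s)f_{N-k} + s\,h_{N-k}$. Decompose $\{\partial f_N = 0\} = \{0\}\cup\bigcup_{j=1}^{p}\C^{*}\!\cdot a^{j}$ as in Section~3. Because $f_{N-k}$ and $h_{N-k}$ are both weighted homogeneous of degree $N-k$, the function $P_s$ vanishes identically on $\C^{*}\!\cdot a^{j}$ if and only if $(1-s)f_{N-k}(a^{j}) + s\,h_{N-k}(a^{j}) = 0$; and since $f_{N-k}(a^{j})$ and $h_{N-k}(a^{j})$ are both non-zero (as $f$ and $h$ are $\WLY_\infty$), this is a non-trivial linear equation in $s$ with at most one root. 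A parallel observation rules out $P_s\equiv 0$ on all of $\CN$ except for at most one $s$. Hence there is a finite set $B \subset \C$, disjoint from $\{0,1\}$, such that $\phi_s$ is $\WLY_\infty$ for $(\w;N;k)$ whenever $s \in \C\setminus B$.

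Choose a smooth path $\gamma : [0,1] \to \C\setminus B$ with $\gamma(0)=0$ and $\gamma(1)=1$, and set $f_t := \phi_{\gamma(t)}$. Each $f_t$ is $\WLY_\infty$ for $(\w;N;k)$, hence quasi-tame by the Proposition of Section~2, and therefore M-tame by \cite{NZ}. The case hypothesis on $f_N=h_N$ in Theorem~\ref{2.5} depends only on the top weighted-degree form, which is common to the whole family, so Corollary~\ref{c3.5} applies uniformly and yields $\mu(f_t) = \mu(f)$ for every $t \in [0,1]$. Theorem~\ref{t4.1} then delivers the fiber-homotopy equivalence (and, for $n\neq 2$, the differentiable isomorphism) of the monodromy fibrations at infinity of $f_0=f$ and $f_1=h$. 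The principal technical step is the bad-set analysis above: a naive real-linear homotopy may well cross values $t\in[0,1]$ at which $P_t$ vanishes on some branch of the singular locus of $f_N$, so one genuinely needs the flexibility of routing $\gamma$ through the complex plane in order to keep the $\WLY_\infty$ condition---and with it the conclusions of Corollary~\ref{c3.5}---valid at every stage of the deformation.
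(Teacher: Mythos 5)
Your proposal is correct and follows essentially the same route as the paper: a linear interpolation $(1-s)f+sh$ whose top form stays $f_N$, a smooth path in $\C$ chosen to avoid the finitely many parameters where the degree-$(N-k)$ part vanishes on some singular orbit $\C^{*}\cdot a^{j}$ (the paper phrases this as condition (\ref{4.1})), constancy of $\mu$ via Corollary \ref{c3.5}, and then Theorem \ref{t4.1}. Your explicit identification of the finite bad set (including ruling out $P_s\equiv 0$, so that $k$ is preserved along the family) is a slightly more careful bookkeeping of the same argument.
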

\begin{proof}
Assume that $f_{N}=h_{N}$ has the transversal singularities at the orbits $\C^{\ast} \cdot a^{j}$, for $1\leq j \leq p$.
 $f$ and $h$ being $\WLY_{\infty}$ imply that  $f_{N-k}(a^{j}) \neq 0$ and $h_{N-k}(a^{j}) \neq 0$ for any $j$.
 We can find a smooth path $\alpha: [0,1] \to \C$ such that   $\alpha(0)=1, \alpha(1)=0$ and \begin{equation} \label{4.1}
f_{N-k}(a^{j})\alpha(t)+ h_{N-k}(a^{j})(1-\alpha(t))\neq 0
\end{equation}   for any $t\in[0,1]$ and all $1\leq j \leq p$.
 Define $f_{t}(x)=\alpha(t) f + (1-\alpha(t)) h$ for $t\in[0,1]$.  The highest weighted degree form of $f_{t}(x)$ is fixed, which is just $f_{N}$. (\ref{4.1}) implies that $f_{t}(x)$ is $\WLY_{\infty}$ for any $t\in[0,1]$, hence quasi-tame.  Corollary \ref{c3.5} shows that the total Milnor number of $f_{t}(x)$ is independent of $t\in [0,1]$, then the claim follows from Theorem \ref{t4.1}. 
\end{proof}

\begin{example}  \label{ex4.1} Take $f_{1}= x_{1}^{3}+x_{1}x_{2}+x_{1}x_{3}^{2}+x_{3}x_{4}^{2}+x_{2}$ and $f_{2}= x_{1}^{3}+x_{1}x_{2}+x_{1}x_{3}^{2}+x_{3}x_{4}^{2}+x_{3}^{2}$  in $\C^{4}$. Then $f_{1}$ and $f_{2}$ are both $\WLY_{\infty}$ with respect to $\w=(1,2,1,1); N=3; k=1$, and they have the same highest weighted degree form. Note that there exists a weighted homogeneous polynomial of type $(\w; N)$ having only isolated singularity (for instance, $x_{1}^{3}+x_{1}x_{2}+x_{3}^{3}+x_{4}^{3}$). Then the monodromy fibrations at infinity of $f_{1}$ and $f_{2}$ are differentiably isomorphic.
\end{example}

\subsection{Tameness of the $\WLY_{\infty}$ polynomial}
In this subsection, we show that the $\WLY_{\infty}$ polynomial is indeed tame in most cases. 

Recall the following criterion of tame polynomial proved by Broughton: 
\begin{prop} \label{p4.1} \cite[Proposition 3.1]{B2}  Let $f: \CN\rightarrow \C$ be a  polynomial. For a point $v\in \CN$, define $f^{v}=f + \sum_{i=1}^{n+1} v_{i}x_{i}$ . Then f is tame if and only if $\mu(f)< \infty$ and $\mu(f)=\mu(f^{v})$ for all sufficiently small $v\in \CN$.
\end{prop}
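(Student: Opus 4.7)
The plan is to prove each direction separately, exploiting the fact that tameness concentrates all critical data inside a fixed compact region, so that the gradient equation $\partial f + v = 0$ behaves in a controlled way for small $v$.

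For the forward implication, I would start from the defining inequality $\|\partial f(x)\| > \varepsilon$ for $x\notin K$. First, the critical set $\{\partial f=0\}$ is an algebraic subvariety of $\CN$ contained in $K$; since any positive-dimensional affine variety is unbounded, this critical set must be zero-dimensional, so $\mu(f)<\infty$. Next, for any $v\in\CN$ with $\|v\|<\varepsilon/2$, the triangle inequality yields $\|\partial f^{v}(x)\|\geq \varepsilon-\|v\|>\varepsilon/2$ on $\CN\setminus K$, so the critical locus of $f^{v}$ also lies in $K$ and $\mu(f^{v})<\infty$. To upgrade to the equality $\mu(f)=\mu(f^{v})$, I would argue that for $v$ small enough the polynomial $f^{v}$ remains tame with the same fibration at infinity, so its generic fiber keeps the homotopy type of $\bigvee_{\mu(f)}S^{n}$ and the Euler characteristic computation forces $\mu(f^{v})=\mu(f)$.

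For the converse, I would argue by contrapositive: assume $\mu(f)<\infty$ and $f$ is not tame, and produce arbitrarily small $v$ with $\mu(f^{v})\neq\mu(f)$. Non-tameness yields a sequence $x^{j}\to\infty$ with $\partial f(x^{j})\to 0$. By Sard's theorem, generic small $v$ is a regular value of $\partial f\colon\CN\to\CN$; the solutions of $\partial f(x)=-v$ are exactly the critical points of $f^{v}$, and near each $x^{j}$ such a solution exists once $\|v\|$ is small, producing new critical points of $f^{v}$ going off to infinity. Meanwhile, the finitely many critical points of $f$ contained in a large ball deform to critical points of $f^{v}$ whose total Milnor contribution is at least $\mu(f)$ by upper semi-continuity. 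Adding at least one extra unit from the new critical point near $x^{j}$ gives $\mu(f^{v})>\mu(f)$, the desired contradiction.

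The main obstacle I expect is the conservation statement in the forward direction: establishing that $\mu$ is genuinely \emph{constant}, not just upper semi-continuous, under a small linear perturbation of a tame polynomial. The cleanest approach I see is a Thom isotopy argument applied to the one-parameter family $f^{tv}$, $t\in[0,1]$, outside a sufficiently large ball; tameness (preserved along the family for small $v$) prevents any critical point from escaping to infinity, so the fibration at infinity is isotopic for all $t$, and the Euler characteristic of the generic fiber, hence $\mu$, is constant along the deformation.
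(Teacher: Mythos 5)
Your argument for $\mu(f)<\infty$ and for confining the critical points of $f^{v}$ to $K$ is fine, but the heart of the forward implication -- the equality $\mu(f^{v})=\mu(f)$ -- is not established. You derive it from the assertion that $f^{v}$ ``remains tame with the same fibration at infinity'', so that its generic fibre is still a bouquet of $\mu(f)$ spheres; that assertion is exactly the hard point. The claim that a (uniformly) tame family has constant fibration at infinity is of the same nature as the H\`a Huy Vui--Zaharia theorem (Theorem \ref{t4.1} of this paper), whose hypotheses \emph{include} constancy of the total Milnor number -- the very thing you want to prove -- and your closing Thom-isotopy sketch ``outside a large ball'' would have to control the fibres at infinity, which is precisely where such arguments are delicate; no details are given. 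The standard argument (and the one behind \cite[Proposition 3.1]{B2}, which the paper quotes without proof) avoids the topology of the fibre entirely: $\mu(f^{v})$ is the number of solutions of $\partial f=-v$ counted with multiplicity, i.e.\ the sum of local degrees of the map $\partial f:\CN\to\CN$ over the value $-v$. Tameness confines these solutions to $K$ for $\Vert v\Vert<\varepsilon/2$, and since $\partial f$ is bounded below on $K$ minus small disjoint balls around its finitely many zeros, for small $v$ every solution lies in one of those balls; conservation of the local degree then gives $\mu(f^{v})=\mu(f)$ exactly.

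In the converse direction, the step ``near each $x^{j}$ such a solution exists once $\Vert v\Vert$ is small'', for a \emph{fixed} generic $v$, is false in general: Sard's theorem makes $-v$ a regular value, but nothing guarantees that $\partial f$ attains the value $-v$ near the tail of the sequence (the image of $\partial f$ there can be very thin; $\partial f$ need not even be dominant). The correct move is to let $v$ depend on $j$: set $v^{j}=-\partial f(x^{j})$, so that $x^{j}$ is itself a critical point of $f^{v^{j}}$ and $v^{j}\to 0$. Then conservation of number (an equality coming from invariance of the local degree of $\partial f$ at each critical point of $f$ -- not ``upper semi-continuity'', which by itself only bounds the nearby Milnor numbers from above) shows that the critical points of $f^{v^{j}}$ inside small balls around the critical points of $f$ contribute exactly $\mu(f)$, while $x^{j}$, lying outside these balls for large $j$, contributes at least $1$ (or forces $\mu(f^{v^{j}})=\infty$). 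Hence $\mu(f^{v^{j}})\neq\mu(f)$ for arbitrarily small $v^{j}$, contradicting ``$\mu(f)=\mu(f^{v})$ for all sufficiently small $v$''. With these two repairs -- the degree-theoretic count in the forward direction and the choice $v^{j}=-\partial f(x^{j})$ in the converse -- your outline becomes the standard proof.
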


\begin{cor} \label{c4.2} Let $f$ be a $\WLY_{\infty}$ polynomial with respect to $(\w; N; k)$ in $\CN$. If $f$ satisfies the assumption in Corollary \ref{c3.5} and $N-k\geq \max\lbrace w_{i} \rbrace$, then $f$ is tame. In particular, if the weights are usual $(\w=\fst)$, $f$ is tame for any $0<k<N$. 
\end{cor}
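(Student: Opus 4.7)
The plan is to apply Broughton's criterion (Proposition \ref{p4.1}): it suffices to show that $\mu(f)<\infty$ and that $\mu(f^{v})=\mu(f)$ for every sufficiently small $v\in \CN$. Finiteness of $\mu(f)$ is immediate, since $f$ is $\WLY_{\infty}$, hence quasi-tame, and so has only isolated critical points.

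The central step will be to show that the perturbation $f^{v}=f+\sum_{i}v_{i}x_{i}$ is itself $\WLY_{\infty}$ with respect to the same triple $(\w;N;k)$. Here I would use the hypothesis $w_{i}\leq N-k$ to note that the linear form $\sum v_{i}x_{i}$ sits in weighted degrees at most $N-k$; consequently $f^{v}_{N}=f_{N}$, while $f^{v}_{N-k}$ differs from $f_{N-k}$ at most by the linear form $\sum_{w_{i}=N-k}v_{i}x_{i}$. In particular $\{\partial f^{v}_{N}=0\}=\{\partial f_{N}=0\}$, and on each singular branch $\C^{\ast}\cdot a^{j}$ of $f_{N}^{-1}(0)$ weighted homogeneity reduces the nonvanishing of $f^{v}_{N-k}$ along the branch to the scalar inequality $f_{N-k}(a^{j})+\sum_{w_{i}=N-k}v_{i}a^{j}_{i}\neq 0$, which holds for $|v|$ small because $f_{N-k}(a^{j})\neq 0$. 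This gives $S(f^{v})\subset\{0\}$.

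Next I would check that the hypothesis of Corollary \ref{c3.5} transfers from $f$ to $f^{v}$: case (1) of Theorem \ref{t3.3} depends only on the homogeneity $(\w;N)$, while case (2) depends only on the transversal singularities of $f_{N}$, which are unchanged; the sub-case $k=1$ needs no extra assumption at all. So Corollary \ref{c3.5} applies to both $f$ and $f^{v}$, and since the total Milnor number is determined by $f_{N}$ and $k$---common to both---we obtain $\mu(f^{v})=\mu(f)$. Proposition \ref{p4.1} then yields tameness of $f$. For the usual weights case $\w=\fst$, one has $\max\{w_{i}\}=1\leq N-k$ automatically for any $0<k<N$, and a weighted homogeneous polynomial of type $(\fst;N)$ with isolated singularity always exists (e.g.\ $\sum x_{i}^{N}$), placing $f$ in case (1); the special claim follows.

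The main obstacle I foresee is the openness argument in the second paragraph---verifying that the $\WLY_{\infty}$ condition genuinely survives an arbitrary small perturbation in degrees $\leq N-k$, rather than just in degrees $<N-k$. This is exactly where the bound $N-k\geq \max\{w_{i}\}$ is used essentially: if some $w_{i}$ exceeded $N-k$, then $v_{i}x_{i}$ could alter $f_{N}$ or a lower-but-critical stratum, and neither the $\WLY_{\infty}$ property nor the applicability of Corollary \ref{c3.5} would be preserved.
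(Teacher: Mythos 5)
Your proposal is correct and follows essentially the same route as the paper: Broughton's criterion (Proposition \ref{p4.1}) combined with showing that the small linear perturbation $f^{v}$ remains $\WLY_{\infty}$ for the same $(\w;N;k)$ with the same top form, so that Corollary \ref{c3.5} forces $\mu(f^{v})=\mu(f)$. The only difference is cosmetic: the paper splits into the cases $N-k>\max\{w_{i}\}$ and $N-k=\max\{w_{i}\}$, whereas you handle both at once via the explicit scalar condition $f_{N-k}(a^{j})+\sum_{w_{i}=N-k}v_{i}a^{j}_{i}\neq 0$ on the singular orbits, which is if anything a slightly more detailed version of the paper's ``small disturbance'' argument.
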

\begin{proof} Without loss of generality, assume that $w_{1}\geq w_{2} \geq \cdots \geq w_{n+1}$. Then the weighted degree of the linear polynomial $\sum_{i=1}^{n+1} v_{i}x_{i}$ is $w_{1}$.

If $N-k>w_{1}$, then $f^{v}$ is also $\WLY_{\infty}$ with respect to $(\w;N;k))$ and has the same highest weighted degree form as $f$. Corollary \ref{c3.5} gives that $\mu(f)=\mu(f^{v})$. So $f$ is tame by the previous proposition.

If $N-k=w_{1}$,  $\partial f_{N}=0$ are at most finite weighted lines passing through the origin in $\CN$. The affine hypersurface defined by $f_{N-k}=0$ does not contain any of these weighted lines defined by $\partial f_{N}=0$. After a small disturbance  induced by $\sum_{i=1}^{n+1} v_{i}x_{i}$, this is still true.  It follows that $f^{v}$ is still a $\WLY_{\infty}$ polynomial. Since $f^{v}$ has the same highest weighted degree form as $f$,   Corollary \ref{c3.5} gives that $\mu(f)=\mu(f^{v})$. By a similar argument, $f$ is tame.
\end{proof}

\begin{example} \label{ex4.3} Take the two polynomials considered in Example \ref{ex4.1}.   Note that there exists a weighted homogeneous polynomial of type ($\w=(1,2,1,1); N=3$) having only isolated singularity, and $N-k\geq w_{2}$. 
So $f_{1}$ and $f_{2}$ are tame. 

Now take $h= x_{1}^{3}+x_{1}x_{2}+x_{1}x_{3}^{2}+x_{3}x_{4}^{2}+x_{3}$ in $\C^{4}$. Then $h$ is $\WLY_{\infty}$ with respect to $\w=(1,2,1,1); N=3; k=2$. Choose a sequence of points $x^{n}= (\dfrac{1}{n^{2}}, -n^{4}/4, -n^{2}/2,0)$, then easy computation shows that  $\lim\limits_{n\rightarrow \infty}\Vert x^{n} \Vert = \infty$, $\lim\limits_{n\rightarrow \infty}\partial h = 0$.  So $h$ is not tame. Note that $N-k=1< max \lbrace w_{i} \rbrace=2$. This shows that the range  
 $N-k\geq \max\lbrace w_{i} \rbrace$ in Corollary \ref{c4.2} is optimal.
\end{example}
\subsection{Thom-Sebastiani Construction}
More complicated polynomial can be studied by using the Thom-Sebastiani Construction. Choose two polynomials $f\in \C[x_{1},\ldots,x_{n+1}]$ and $h\in \C[y_{1},\ldots,y_{m+1}]$ with two independent groups of variables. Assume that $f$ and $h$ are $\WLY_{\infty}$ with respect to $(\w;N;k)$ and $(\w^{\prime};N^{\prime};k^{\prime})$, respectively. In general, $f+h$ is not $\WLY_{\infty}$ anymore, see the following Example \ref{ex4.4}. 

Note that $\partial(f+h)(x,y)=0$  if and only if  $\partial f(x)=0$ and $\partial h(y)=0$. Thom-Sebastiani Theorem (cf. \cite[Corollary 3.3.21]{D1}) gives that $\mu_{(x,y)}(f+h)=\mu_{x}(f)\mu_{y}(h)$, then \begin{center}
$\mu(f+h)=\sum_{(x,y)} \mu_{(x,y)}(f+h)=(\sum _{x}\mu_{x}(f))( \sum_{y}\mu_{y}(h))=\mu(f)\mu(h)<\infty$.
\end{center} 

The generic fibers of $f$ and $h$ are $(n-1)$ and  $(m-1)$ connected, respectively.  Then the generic fibre of $f+h$ is $(n+m)$-connected, and it is homotopy equivalent to the join of the generic fibers of $f$ and $h$, see \cite{N2}.  So the generic fibre of $f+h$ is homotopy equivalent to a bouquet of $(n+m+1)$-spheres, i.e. $$(f+h)^{-1}(c)\simeq \vee_{\mu(f+h)}S^{n+m+1},$$ where $\mu(f+h)=\mu(f)\mu(h)$ equals to the number of the spheres and $c$ is generic for $f+h$. 

\medskip
N\'{e}methi obtained the  global Thom-Sebastiani theorem for the polynomial maps: the monodromy operators at infinity of the generic fibers satisfy the equality $M_{\infty}(f+h)=M_{\infty}(f)\otimes M_{\infty}(h)$, see \cite[Theorem 3.2]{N2}. 
 \begin{example} Take the two polynomials $f_{1}$ and $f_{2}$ considered in Example \ref{ex4.1}, then $M_{\infty}(f_{1})=M_{\infty}(f_{2})$.  So \cite[Theorem 3.2]{N2} shows that $M_{\infty}(f_{1}(x)+f_{1}(y))=M_{\infty}(f_{2}(x)+f_{2}(y))$.
 \end{example}
 For the property of tameness, we have the following result:
\begin{cor} Choose two polynomials $f\in \C[x_{1},\ldots,x_{n+1}]$ and $h\in \C[y_{1},\ldots,y_{m+1}]$ with two independent groups of variables. If $f$ and $h$ are both tame, then so is $f+h$. 
\end{cor}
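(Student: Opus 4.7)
The plan is to unwind the definitions. Tameness of $f$ means there exist $\varepsilon_{f} > 0$ and a compact set $K_{f} \subset \C^{n+1}$ with $\Vert \partial f(x) \Vert > \varepsilon_{f}$ for all $x \in \C^{n+1} \setminus K_{f}$; similarly tameness of $h$ produces $\varepsilon_{h} > 0$ and $K_{h} \subset \C^{m+1}$. I will show that the product $K_{f} \times K_{h}$, which is compact in $\C^{n+m+2}$, together with the constant $\varepsilon = \min\{\varepsilon_{f}, \varepsilon_{h}\} > 0$, witnesses the tameness of $f+h$.

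The key point is that since $f$ and $h$ depend on disjoint groups of variables, the gradient of $f+h$ decouples as
\begin{equation*}
\partial (f+h)(x,y) = \bigl(\partial f(x),\, \partial h(y)\bigr),
\end{equation*}
so $\Vert \partial(f+h)(x,y) \Vert^{2} = \Vert \partial f(x) \Vert^{2} + \Vert \partial h(y) \Vert^{2}$. Now given any $(x,y) \in \C^{n+m+2} \setminus (K_{f} \times K_{h})$, at least one of $x \notin K_{f}$ or $y \notin K_{h}$ must hold. In the first case, tameness of $f$ yields $\Vert \partial(f+h)(x,y) \Vert \geq \Vert \partial f(x) \Vert > \varepsilon_{f} \geq \varepsilon$; the second case is symmetric. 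Hence $f+h$ is tame.

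There is essentially no obstacle here, since the argument only uses the elementary fact that the gradient of a sum of polynomials in disjoint variables splits orthogonally. I would note in passing that this also confirms $\mu(f+h) < \infty$ (consistent with the Thom--Sebastiani computation $\mu(f+h) = \mu(f)\mu(h)$ given just above), because outside the compact set $K_{f} \times K_{h}$ the gradient is bounded away from zero, so the singular set of $f+h$ is contained in the compact set $\{\partial f = 0\} \times \{\partial h = 0\}$ and is therefore finite.
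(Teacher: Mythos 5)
Your proof is correct, but it goes by a different route than the paper. You verify Broughton's definition of tameness directly: since $f$ and $h$ involve disjoint variables, $\partial(f+h)(x,y)=(\partial f(x),\partial h(y))$ splits orthogonally, and outside the compact set $K_f\times K_h$ at least one of the two blocks has norm exceeding $\min\{\varepsilon_f,\varepsilon_h\}$ -- the set-theoretic point that the complement of $K_f\times K_h$ is covered by $(\C^{n+1}\setminus K_f)\times\C^{m+1}$ and $\C^{n+1}\times(\C^{m+1}\setminus K_h)$ is handled correctly. The paper instead never touches the definition: it invokes Broughton's numerical criterion (Proposition \ref{p4.1}, tameness is equivalent to $\mu(f)<\infty$ together with $\mu(f)=\mu(f^{v})$ for all small $v$) and the Thom--Sebastiani multiplicativity $\mu(f+h+\sum v_ix_i+\sum v'_jy_j)=\mu(f+\sum v_ix_i)\,\mu(h+\sum v'_jy_j)=\mu(f)\mu(h)=\mu(f+h)$, then applies the criterion again to conclude. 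Your argument is more elementary and self-contained (no Milnor numbers needed, and it would apply verbatim to any pair of functions with gradients bounded below off compacts), while the paper's argument stays within the Milnor-number framework it has just set up and reuses the product formula $\mu(f+h)=\mu(f)\mu(h)$ established immediately above the corollary; your closing observation that the critical locus of $f+h$ is the finite set $\{\partial f=0\}\times\{\partial h=0\}$, hence $\mu(f+h)<\infty$, is a correct bonus consistent with that computation.
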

\begin{proof}  If $f$ and $h$ are tame, then  $\mu(f+h)=\mu(f)\mu(h)<\infty$. For small enough $(v,v^{\prime})\in \C^{n+1}\times \C^{m+1}$, \[
\begin{aligned}
 \mu(f+h+\sum_{i=1}^{n+1} v_{i}x_{i}+ \sum_{j=1}^{m+1} v^{\prime}_{j}x_{j})
 & =\mu(f+\sum_{i=1}^{n+1} v_{i}x_{i})\mu(h+\sum_{j=1}^{m+1} v^{\prime}_{j}x_{j}) \\
 & =\mu(f)\mu(h)=\mu(f+h),
\end{aligned}
\]  where the third equality follows from Proposition \ref{p4.1}.  Using Proposition \ref{p4.1} again, the claim follows. 
\end{proof}
 
 \begin{example} \label{ex4.4} Take $f=x_{1}^{2}x_{2}+x_{3}^{3}+x_{2} $ and $h=y_{1}^{2}y_{2}+y_{3}^{3}+y_{2}$ both with usual weights and $N=3,k=2$. For any system of positive weights, the highest weighted degree form of $f+h$ always has at least 2-dimensional singularities, so $f+h$ is not $\WLY_{\infty}$.  Corollary \ref{c4.2} shows that $f$ and $h$ are both tame, then so is $f+h$.
 \end{example}


\end{document}